\documentclass[12pt,a4paper]{amsart}

\usepackage{hyperref}
\usepackage{color}

\usepackage[dvipdfm]{graphicx}
\usepackage{pb-diagram,pb-xy}

\usepackage{amsmath,amsfonts,amssymb}

\usepackage{latexsym,mathrsfs,amsthm, amscd, url, psfrag}
\usepackage[all,graph]{xy}
\usepackage{pb-diagram}
\xyoption{import}

\def\Z{\mathbb{Z}}
\def\N{{\mathbb{N}}}
\def\Q{{\mathbb{Q}}}

\def\wt{\widetilde}

\def\a{\alpha}

\def\g{\gamma}
\def\G{\Gamma}

\def\ac2{\mathcal{AC}_2}
\def\F{\mathbb{F}}

\def\ba{\begin{array}}
\def\ea{\end{array}}
\def\bn{\begin{enumerate}}
\def\en{\end{enumerate}}
\def\fp{F \times P}
\def\ll{\langle}
\def\rr{\rangle}
\theoremstyle{plain}
\newtheorem{theorem}{Theorem}[section]
\newtheorem{proposition}[theorem]{Proposition}
\newtheorem{lemma}[theorem]{Lemma}
\newtheorem{corollary}[theorem]{Corollary}

\newtheorem*{maintheorem}{Main Theorem}

\theoremstyle{definition}
\newtheorem{definition}[theorem]{Definition}
\newtheorem{remark}[theorem]{Remark}

\newtheorem*{fact*}{Fact}
\newtheorem*{claim}{Claim}
\def\sm{\setminus}

\DeclareMathOperator{\im}{im}

\DeclareMathOperator{\Aut}{Aut}

\DeclareMathOperator{\Id}{Id}

\DeclareMathOperator{\GL}{GL}

\newcommand{\eps}{\varepsilon}
\parindent=1em

\begin{document}

\title[Injectivity for Casson-Gordon type representations]{An Injectivity Theorem for Casson-Gordon Type Representations relating to the Concordance of Knots and Links}


\author{Stefan Friedl}
\address{Mathematisches Institut\\ Universit\"at zu K\"oln\\   Germany}
\email{sfriedl@gmail.com}

\author{Mark Powell}
\address{University of Edinburgh, United Kingdom}
\email{M.A.C.Powell@sms.ed.ac.uk}

\def\subjclassname{\textup{2000} Mathematics Subject Classification}
\expandafter\let\csname subjclassname@1991\endcsname=\subjclassname
\expandafter\let\csname subjclassname@2000\endcsname=\subjclassname


\begin{abstract}
In the study of homology cobordisms, knot concordance and link concordance, the following  technical problem arises frequently:
let $\pi$ be a group and let $M \to N$ be a homomorphism between projective $\Z[\pi]$-modules such that  $\Z_p \otimes_{\Z[\pi]} M\to \Z_p \otimes_{\Z[\pi]} N$ is injective; for which other right $\Z[\pi]$-modules $V$ is the induced map  $V \otimes_{\Z[\pi]} M\to V\otimes_{\Z[\pi]}N$ also injective?
Our main theorem gives a new criterion which combines and generalizes many previous results.
\end{abstract}

\maketitle

\section{Introduction}

Let $\pi$ be a group. Throughout this paper we will assume that all groups are finitely generated.
We denote the augmentation map $\Z[\pi]\to \Z$ by $\eps$ and we denote the composition of $\eps$ with the map $\Z \to \Z_p = \Z/p\Z$ by $\eps_p$.
We can then view $\Z_p$ as a right $\Z[\pi]$-module via the augmentation map.
We are interested in the following question.  Let  $M\to N$ be a homomorphism between projective left $\Z[\pi]$-modules such that the induced map $\Z\otimes_{\Z[\pi]} M\to \Z\otimes_{\Z[\pi]}N$ is injective.  For which other right $\Z[\pi]$-modules $V$ is the induced map  $V\otimes_{\Z[\pi]} M\to V\otimes_{\Z[\pi]}N$ also injective?
Finding such modules is a key ingredient in the obstruction theory for homology cobordisms, knot concordance and link concordance.
We refer to \cite[Proposition~2.10]{COT} and our Section \ref{Section:application} for more information.

To state the previously known results and our main theorem we need to introduce more notation.
Let  $\phi\colon\pi\to H$ be a homomorphism to a torsion-free abelian group, let $Q$ be a field of characteristic zero and let $\a\colon\pi\to \GL(k,Q)$ be a representation.  Note that $\a$ and $\phi$ give rise to a right $\Z[\pi]$-module structure on $Q^k\otimes_{Q} Q[H] = Q[H]^k$ as follows:
\[ \ba{rcl} Q^k\otimes_{Q} Q[H] \times \Z[\pi]&\to & Q^k\otimes_{Q} Q[H]  \\
(v\otimes p,g)&\mapsto & (v\cdot \a(g)\otimes p\cdot \phi(g)).\ea \]
(Here, and throughout this paper we view $Q^k$ as row vectors.)


Given a prime $p$ we say that a group is $p$-group if the order of the group is a power of $p$.
The following is a folklore theorem  (see e.g. \cite[Lemma~3.2]{cha_twisted_2010} for a proof).

\begin{theorem}\label{oldthm}
Let $\pi$ be a group, let $p$ be a prime and let $f \colon M \to N$ be a morphism of projective left $\Z[\pi]$-modules such that
\[\Id\otimes f\colon \Z_p\otimes_{\Z[\pi]} M \to \Z_p\otimes_{\Z[\pi]} N\]
is injective.  Let $\phi \colon \pi \to H$ be an epimorphism onto a torsion-free abelian group and let $\a \colon \pi \to \GL(k,Q)$ be a representation, with $Q$ a field of characteristic zero.  If $\a$ factors through a $p$-group, then
\[\Id\otimes f\colon Q[H]^k\otimes_{\Z[\pi]} M \to Q[H]^k\otimes_{\Z[\pi]} N\]
is also injective.
\end{theorem}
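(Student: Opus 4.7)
The plan is a chain of four reductions bridging the desired conclusion back to the hypothesis, with flatness of $M$ and $N$ used throughout. First, since $G$ is finite and $Q$ has characteristic zero, $Q[G]$ is semisimple by Maschke's theorem, so the $Q[G]$-module $Q^k$ (via $\a$) is projective. Writing $Q^k\oplus L\cong Q[G]^m$ and tensoring over $Q$ with $Q[H]$ yields $Q[H]^k\oplus(L\otimes_Q Q[H])\cong Q[G\times H]^m$ as right $\Z[\pi]$-modules, where the action factors through $\pi\to G\times H$. Since injectivity of $V\otimes_{\Z[\pi]} f$ passes to direct summands of $V$, it suffices to prove the conclusion for $V=Q[G\times H]$.

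Next, I would bridge from $Q$ to $\Z_p$ coefficients. Since $Q$ is $\Z$-flat, kernels are preserved by $Q\otimes_\Z-$. As $M$ is a direct summand of some $\Z[\pi]^{(I)}$, the module $\Z[G\times H]\otimes_{\Z[\pi]} M$ is a summand of the free abelian group $\Z[G\times H]^{(I)}$ and hence is itself free abelian; similarly $\Z[G\times H]\otimes_{\Z[\pi]} N$ is $\Z$-torsion-free. A diagram chase then shows that if $K=\ker(\Z[G\times H]\otimes_{\Z[\pi]} f)$, then $K/pK$ injects into $\ker(\Z_p[G\times H]\otimes_{\Z[\pi]} f)$; vanishing of the latter forces $K=pK$, and an infinitely $p$-divisible element of a free abelian group is zero. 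So it suffices to prove that $\Z_p[G\times H]\otimes_{\Z[\pi]} f$ is injective.

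Now the $p$-group hypothesis on $G$ enters. The augmentation ideal $I\subset\Z_p[G]$ is nilpotent, because $\Z_p[G]$ is a finite local ring with residue field $\Z_p$ (the group algebra of a $p$-group in characteristic $p$). Tensoring the $I$-adic filtration over $\Z_p$ with $\Z_p[H]$ gives a finite filtration of $\Z_p[G\times H]$ by right $\Z[\pi]$-submodules whose successive quotients are direct sums of copies of $\Z_p[H]$: since $(g-1)I^j\subseteq I^{j+1}$, $G$ acts trivially on $I^j/I^{j+1}$, so the $\Z[\pi]$-action on these quotients factors through $\pi\to H$. By flatness of $M$ and $N$ together with a five-lemma induction along the filtration, it suffices to prove that $\Z_p[H]\otimes_{\Z[\pi]} f$ is injective.

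Finally, $\Z_p[H]$ is an integral domain. After reducing to the finitely generated free case (any hypothetical nonzero kernel element is supported in a finitely generated subobject), the map is represented by a matrix over $\Z_p[H]$ whose rank over the fraction field dominates the rank of any specialization. Specializing along the augmentation $h\mapsto 1$ recovers $\Z_p\otimes_{\Z[\pi]} f$, which has full column rank by hypothesis; hence so does the matrix over $\Z_p[H]$, and the map is injective. The main obstacle I anticipate is precisely this last step: handling arbitrary projective $M,N$ requires localizing the problem to a finitely generated subobject compatible with both the $\Z_p[H]$- and $\Z[\pi]$-structures, whereas the first three reductions are essentially formal manipulations.
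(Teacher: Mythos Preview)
Your argument is correct, including the last step you flag as a potential obstacle: once you reach the map $\Z_p[H]\otimes_{\Z[\pi]}M\to\Z_p[H]\otimes_{\Z[\pi]}N$, you are dealing with a morphism of projective $\Z_p[H]$-modules whose reduction along the augmentation $\Z_p[H]\to\Z_p$ is injective, and the $\Z[\pi]$-structure has already been absorbed. Replacing source and target by free $\Z_p[H]$-modules via $g\oplus\id$ and then restricting to the finite support of a putative kernel element reduces cleanly to the finite-rank matrix argument you sketch; no compatibility with the original $\Z[\pi]$-structure is needed at that point.

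The paper does not prove Theorem~\ref{oldthm} directly; it cites an outside reference and instead proves the stronger Main Theorem (Theorem~\ref{Main_Theorem_2}), of which Theorem~\ref{oldthm} is the special case where all of $\a$, not just $\a|_{\ker\phi}$, factors through a $p$-group. Compared with that proof, your route is genuinely different and more elementary for this special case. The paper first invokes Strebel's class $D(\Z_p)$ to get injectivity of $f$ itself, then uses the group-theoretic Lemma~\ref{Lemma:group_theoretic_lemma} to locate a finite-index subgroup $F\times P\le\Gamma$, passes to the field $Q(F)$, and finally applies Maschke's theorem over $Q(F)$ to embed the representation into copies of $Q(F)[P]$. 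You bypass both Strebel's general theorem and the group-theoretic lemma: because $\a$ already factors through $G$, you have a global product $G\times H$ to work with, you apply Maschke over $Q$ rather than over $Q(F)$, and you replace the appeal to $D(\Z_p)$ with two concrete ingredients, namely the nilpotence of the augmentation ideal of $\Z_p[G]$ and the rank-under-specialization argument over the domain $\Z_p[H]$. What the paper's machinery buys is precisely the extension to the case where only $\a|_{\ker\phi}$ factors through a $p$-group: your third reduction hinges on $\Z_p[G]$ being local with nilpotent radical, which uses that the whole of $\a$ lands in a $p$-group, and there is no obvious substitute once that hypothesis is weakened.
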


This theorem (sometimes with the special case that $H$ is the trivial group) is at the heart of many results in link concordance.
We refer to the work of Cha, Friedl, Ko, Levine and Smolinski (see  \cite{cha_Hirzebuch_type_defects}, \cite{cha_twisted_2010},  \cite{Fr05},  \cite{CK99}, \cite{CK06}, \cite{Le94}, \cite{Sm89}) for
many instances where the above theorem is implicitly or explicitly used.

Note though that the condition that the representation has to factor through a $p$-group is quite restrictive.
For example it is a consequence of Stallings' theorem \cite{St65} that for a knot any such representation necessarily factors through the abelianization.
The following is now our main theorem:

\begin{maintheorem}[Theorem \ref{Main_Theorem_2}]
Let $\pi$ be a group, let $p$ be a prime and let $f \colon M \to N$ be a morphism of projective left $\Z[\pi]$-modules such that
\[\Id\otimes f\colon \Z_p\otimes_{\Z[\pi]} M \to \Z_p\otimes_{\Z[\pi]} N\]
is injective.  Let $\phi \colon \pi \to H$ be an epimorphism onto a torsion-free abelian group and let $\a \colon \pi \to \GL(k,Q)$ be a representation, with $Q$ a field of characteristic zero.  If $\a|_{\ker (\phi)}$ factors through a $p$-group, then
\[\Id\otimes f\colon Q[H]^k\otimes_{\Z[\pi]} M \to Q[H]^k\otimes_{\Z[\pi]} N\]
is also injective.
\end{maintheorem}

This theorem makes it possible to work with a considerably larger class of representations than in Theorem \ref{oldthm}.
For example the `Casson-Gordon type' representations which appear in \cite{Let00,Fr04,HKL,FV09} and which originate from the seminal paper by Casson and Gordon \cite{CassonGordon} satisfy the hypothesis of Theorem \ref{Main_Theorem_2}.
Our theorem also connects \cite[Theorem~3.2]{cha_Hirzebuch_type_defects} to the Casson-Gordon setup.  An application of our theorem to define so--called algebraic Casson-Gordon obstructions purely in terms of the chain complex with duality of the metabelian cover of the exterior of a knot, will appear in the PhD thesis of the second author \cite{Powellthesis}.
Note that all of the above apply the theorem to the case that the rank of $H$ is one. In this case our result is closely related to the work of Letsche \cite[Propositions~3.3~and~3.9]{Let00}.

In our opinion the most interesting aspect of our theorem is that the group $H$ may have rank greater than one. This allows the possibility of using representations of link complements of Casson-Gordon type, i.e. representations which do not necessarily factor through nilpotent quotients of the link group.  We will investigate this in a future paper.

The paper is organized as follows:  Section \ref{Section:preliminaries} contains some preliminary lemmas which are used in key places in the proof of the main theorem, Section \ref{Section:main_proof} contains the proof of the main theorem, and then Section \ref{Section:application} uses the main theorem to reprove and generalize Casson and Gordon's Lemma 4 in \cite{CassonGordon}.

\subsection*{Acknowledgment.} This research was conducted while MP was a visitor at the Max Planck Institute for Mathematics in Bonn, which he would like to thank for its hospitality.

\section{Preliminaries}\label{Section:preliminaries}

In this section we collect and prove various preliminary lemmas which we will need in the proof of our main theorem.

\subsection{Strebel's class $D(R)$}

\begin{definition}[Strebel \cite{Strebel}]\label{defn:Strebel}
Suppose $R$ is a commutative ring with unity.  We say that a group $G$ lies in Strebel's class $D(R)$ if whenever we have a morphism $f \colon M \to N$ of projective $R[G]$-modules
(not necessarily finitely generated) such that
\[\Id_R \otimes f \colon R \otimes_{R[G]} M \to R \otimes_{R[G]} N\]
 is injective, then $f$ itself is injective.
(Here we view $R$ as an $R[G]$-module via the augmentation $\epsilon \colon R[G] \to R$.)
\qed\end{definition}

The following lemma was proved by Strebel \cite{Strebel} (see also \cite[Lemma 6.8]{chaorr}).

\begin{lemma}\label{Lemma:ChaOrr6.8}
Suppose $G$ is a group admitting a subnormal series
\[G = G_0 \supset G_1 \supset \cdots \supset G_n = \{e\}\]
whose quotients $G_i/G_{i+1}$ are abelian and have no torsion coprime to $p$.  Then $G$ is in $D(\Z_p)$.
\end{lemma}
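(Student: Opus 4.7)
My plan is to induct on the length $n$ of the subnormal series, using as the key tool an \emph{extension lemma}: whenever $1\to N\to G\to Q\to 1$ is short exact with $N,Q\in D(R)$, then $G\in D(R)$.

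To prove the extension lemma, let $f\colon A\to B$ be a morphism of projective $R[G]$-modules with $\Id\otimes f\colon R\otimes_{R[G]} A\to R\otimes_{R[G]} B$ injective. Picking coset representatives exhibits $R[G]$ as a free $R[N]$-module on either side, so restriction of scalars along $R[N]\hookrightarrow R[G]$ takes $R[G]$-projectives to $R[N]$-projectives. Moreover $R[Q]\otimes_{R[G]} A$ is projective over $R[Q]$ (being a direct summand of a direct sum of copies of $R[Q]$) and, using normality of $N$, identifies canonically with $R\otimes_{R[N]} A = A/I_N A$, where $I_N$ denotes the augmentation ideal of $R[N]$. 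From
\[R\otimes_{R[Q]}\!\bigl(R[Q]\otimes_{R[G]} f\bigr) \;=\; R\otimes_{R[G]} f\]
and $Q\in D(R)$, the map $R[Q]\otimes_{R[G]} f = R\otimes_{R[N]} f$ is injective; then $N\in D(R)$ forces $f$ itself to be injective.

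Armed with the extension lemma, the induction on $n$ is quick: the base case $n=0$ is vacuous, and applying the lemma to $1\to G_1\to G\to G/G_1\to 1$ (with $G_1\in D(\Z_p)$ by induction, since it inherits a subnormal series of length $n-1$) reduces everything to showing that an abelian group $A$ with no torsion coprime to $p$ is in $D(\Z_p)$. A direct limit argument over finitely generated subgroups reduces to the case when $A$ is finitely generated, where $A\cong \Z^r\oplus P$ by the structure theorem, with $P$ a finite abelian $p$-group. Iterating the extension lemma on $1\to P\to A\to \Z^r\to 1$ and then on $1\to \Z\to \Z^r\to \Z^{r-1}\to 1$, it suffices to treat the two base cases $G=P$ and $G=\Z$ individually.

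For $P$ a finite $p$-group, the augmentation ideal $I\subset \Z_p[P]$ is nilpotent (it is the Jacobson radical of a finite $\Z_p$-algebra). After stabilising so that $A$ and $B$ are free, the associated graded piece $I^k A/I^{k+1} A\to I^k B/I^{k+1} B$ is obtained from $\Id\otimes f\colon A/IA\to B/IB$ by tensoring over the field $\Z_p$ with the $\Z_p$-vector space $I^k/I^{k+1}$, and so remains injective; if $0\neq a\in \ker f$, pick $k$ maximal with $a\in I^k A$ (possible since $I^N=0$ for some $N$) to get a nonzero element of $I^k A/I^{k+1} A$ in the kernel, a contradiction. For $\Z$, the ring $\Z_p[t,t^{-1}]$ is a PID and projectives are free; given $0\neq a\in\ker f$, factor out the largest power of $(t-1)$ dividing all coordinates of $a$ to write $a=(t-1)^n a'$, so that $f(a')=0$ by torsion-freeness of $B$ while $\bar a'\in A/(t-1)A$ is nonzero by construction, again a contradiction. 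The main obstacle is the careful setup of the extension lemma --- simultaneously keeping track of $R[Q]\otimes_{R[G]} A$ as an $R[Q]$-projective and as $A/I_N A$ with its inherited $R[N]$-projective structure, and using normality of $N$ to identify these --- together with the direct limit reduction to finitely generated abelian groups; once this bookkeeping is in place, the induction closes on the two transparent base cases above.
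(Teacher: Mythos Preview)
The paper does not give its own proof of this lemma; it simply cites Strebel (and Cha--Orr). What you have written is essentially a reconstruction of Strebel's argument: the extension lemma is precisely his closure property for $D(R)$ under group extensions, and your two base cases---$\Z$ via the PID structure of $\Z_p[t,t^{-1}]$, and finite $p$-groups via nilpotency of the augmentation ideal in $\Z_p[P]$---are the standard ones. The argument is correct.

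One comment on the direct-limit reduction to finitely generated abelian groups, which you invoke without justification. The underlying fact is that $D(R)$ is closed under directed unions of subgroups (also in Strebel), and the verification is not purely formal: after reducing to free modules, a nonzero kernel element $m=\sum_{i\in I_0} m_i e_i$ together with the finitely many matrix entries $a_{ij}$ needed to witness $f(m)=0$ all lie in some $R[G_\alpha]$; restricting to the finitely generated free $R[G_\alpha]$-modules on the relevant basis elements gives a map whose augmentation is still injective (one keeps all columns hit by the rows in $I_0$), and then $G_\alpha\in D(R)$ forces $m=0$. Alternatively, note that under the paper's standing convention that all groups are finitely generated, the terms $G_i$ of the subnormal series are finitely generated, hence so are the abelian quotients $G_i/G_{i+1}$, and the direct-limit step can simply be dropped.
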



We will later need the following consequence of the previous lemma:

\begin{lemma}\label{Lemma:GammaDZp}
Let $\pi$ be a group.  Let $\phi \colon \pi \to H$ be an epimorphism onto a  torsion-free abelian group $H$ and let $\a \colon \pi \to \GL(k,Q)$ be a representation, for some field $Q$, such that $\a |_{\ker \phi}$ factors through a $p$-group $P'$.  Let
\[\G := \frac{\pi}{\ker( \a \times \phi \colon \pi \to  \GL(k,Q) \times H)}.\]
Then $\G \in D(\Z_p)$.
\end{lemma}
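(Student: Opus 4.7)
The plan is to exhibit a subnormal series for $\G$ of the form required by Lemma \ref{Lemma:ChaOrr6.8}, and then simply apply that lemma. The key structural observation is that $\G$ fits into a short exact sequence whose kernel is a $p$-group and whose quotient is torsion-free abelian.

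First I would identify the obvious normal subgroup of $\G$. Set $K := \ker \phi$ and $L := \ker(\a \times \phi) = K \cap \ker \a$. Then $\phi$ descends to an epimorphism $\overline{\phi} \colon \G \to H$ whose kernel is $K/L$, giving a short exact sequence
\[ 1 \longrightarrow K/L \longrightarrow \G \longrightarrow H \longrightarrow 1. \]
The subgroup $K/L$ is the image of $\a|_K$ in $\GL(k,Q)$, so it is a quotient of the $p$-group $P'$ through which $\a|_K$ factors. Hence $K/L$ is itself a (finite) $p$-group; call it $P$.

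Next I would build a subnormal series. Since $P$ is a finite $p$-group it is nilpotent, and so admits a subnormal series
\[ P = P_0 \supset P_1 \supset \cdots \supset P_n = \{e\} \]
with each quotient $P_{i}/P_{i+1}$ an abelian $p$-group. Concatenating this with $\G \supset P$ gives a subnormal series
\[ \G \supset P_0 \supset P_1 \supset \cdots \supset P_n = \{e\}. \]
Its top quotient $\G/P \cong H$ is torsion-free abelian, hence has no torsion at all (in particular no torsion coprime to $p$), and the remaining quotients $P_i/P_{i+1}$ are abelian $p$-groups, so a fortiori have no torsion coprime to $p$.

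Finally I would invoke Lemma \ref{Lemma:ChaOrr6.8} applied to this series to conclude that $\G \in D(\Z_p)$. The only nontrivial point is the first step, namely verifying that $K/L$ is a $p$-group; this rests on the elementary observation that a subgroup of $\GL(k,Q)$ which is a homomorphic image of a finite $p$-group must itself be a finite $p$-group. Once that is in place, the argument is a direct bookkeeping of normal subgroups, so I do not expect any real obstacle beyond setting up the notation carefully.
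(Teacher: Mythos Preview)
Your proposal is correct and follows essentially the same argument as the paper: identify $\ker(\overline{\phi})\cong \im(\a|_{\ker\phi})$ as a $p$-group, refine it via the lower central series into abelian $p$-quotients, prepend the torsion-free abelian top quotient $H$, and invoke Lemma~\ref{Lemma:ChaOrr6.8}. The only minor imprecision is that $K/L$ is a quotient of a \emph{subgroup} of $P'$ (namely the image of $K$ in $P'$) rather than of $P'$ itself, but since subgroups and quotients of $p$-groups are again $p$-groups this does not affect the argument.
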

\begin{proof}
$\G$ admits a normal series
\[\G = \G_0 = \frac{\pi}{\ker(\a \times \phi)} \supset \G_1 = \frac{\ker (\phi)}{\ker(\a \times \phi )} \subset \G_2 = \{e\}.\]
The quotient $\G_0/\G_1 \cong \im (\phi) \cong H$ and is free abelian.  The quotient $\G_1/\G_2 = \G_1$ is isomorphic to $\im(\a|_{\ker(\phi)})$, which is a quotient of a subgroup of the $p$-group $P'$.  Since every subgroup of a $p$-group is a $p$-group by Lagrange's theorem, every quotient is also a $p$-group.  We therefore have that $\G_1$ is a  $p$-group.  Any $p$-group is nilpotent, and its lower central series gives a normal series of subgroups
\[\G_1 = P_0 \supset P_1 \supset P_2 \supset \cdots \supset P_j = \{e\} \]
such that each quotient $P_i/P_{i+1}$ is an abelian $p$-group (See for example \cite[Section~9]{Aschbacher94}).  Combining the two normal series we obtain
\[\G = \G_0 \supset \G_1 = P_0 \supset P_1 \supset P_2 \supset \cdots \supset P_j = \{e\}.\]
Therefore the conditions of Lemma \ref{Lemma:ChaOrr6.8} are satisfied and so $\G \in D(\Z_p)$.
\end{proof}

\subsection{A group theoretic lemma}

The following lemma shows that, roughly speaking the group $\G$ in Lemma \ref{Lemma:GammaDZp} is `virtually a product $F\times P$', where  $F$ is a free abelian group and $P$ is a $p$-group.

\begin{lemma}\label{Lemma:group_theoretic_lemma}
Let $\G$ be a group which admits an epimorphism $\phi\colon\G\to H$ to a free abelian group, such that the kernel is a $p$-group $P$.
Then there exists a finite index subgroup $F$ of $H$ and a monomorphism
\[ \Psi\colon F\times P \to \G\]
with the following properties:
\bn
\item  the injection is the identity on $P$,
\item  the composition
\[ F\to F\times P\to \G\xrightarrow{\phi} H\]
maps $F$ isomorphically onto $F\subset H$,
\item the image of $\Psi$ is a  finite index normal subgroup of  $\G$.
\en
\end{lemma}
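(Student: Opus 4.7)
The plan is to exploit the finiteness of $P$. Since $\Aut(P)$ is then finite, the centralizer $C := C_\G(P)$, being the kernel of the conjugation action $\G \to \Aut(P)$, has finite index in $\G$. Setting $F' := \phi(C)$, the second isomorphism theorem gives $F' \cong C/(C \cap P) = C/Z(P)$, and since $C$ centralizes all of $P \supseteq Z(P)$ we obtain a \emph{central} extension
$$1 \to Z(P) \to C \to F' \to 1,$$
with $F'$ a finite index subgroup of the free abelian group $H$. The problem then reduces to splitting this central extension over a finite index subgroup $F \le F'$: once a lift $F \le C$ is found with $F \cap P = \{1\}$, setting $\Psi(f,p) := \tilde f \cdot p$ (with $\tilde f \in F$ the chosen lift of $f$) gives the desired monomorphism, because $F \le C$ commutes with $P$.

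To produce the splitting I would choose a $\Z$-basis $v_1,\ldots,v_n$ of the free abelian group $F'$, lift each $v_i$ to some $g_i \in C$, and pass to $e$-th powers, where $e$ denotes the exponent of the finite abelian group $Z(P)$. The key observation is that $C$ is nilpotent of class at most two: $[C,C] \le Z(P)$ (because $C/Z(P) \cong F'$ is abelian) and $Z(P) \le Z(C)$ (by the very definition of $C$). In such a group $[g_i^e, g_j] = [g_i,g_j]^e = 1$, since $[g_i,g_j] \in Z(P)$ has order dividing $e$, so the $g_i^e$ pairwise commute. Letting $F := \langle g_1^e, \ldots, g_n^e \rangle \le C$, a general element $\prod (g_i^e)^{a_i}$ of $F$ maps under $\phi$ to $\sum e a_i v_i$, which vanishes only when all $a_i = 0$; hence $F \cong \Z^n$ injects into $H$ with image $eF'$. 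In particular $F \cap P = \{1\}$ and $F \cong eF'$ has finite index in $H$.

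Identifying $F$ with $eF' \le H$, properties (1) and (2) hold by construction of $\Psi$. The image $\Psi(F \times P) = F \cdot P$ has index $[H:F]$ in $\G$, and is normal: $g P g^{-1} = P$ for all $g \in \G$, while for $\tilde f \in F$ one has $\phi(g \tilde f g^{-1}) = \phi(\tilde f)$ because $H$ is abelian, so $g \tilde f g^{-1} \in \tilde f \cdot P \subseteq F \cdot P$. This gives property (3).

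The main technical step is recognizing the 2-step nilpotency of $C$ and using the $e$-th power trick to split the resulting central extension of $F'$ by $Z(P)$; once this structure is in place the verifications of the three properties are immediate from the construction.
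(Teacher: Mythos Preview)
Your proof is correct and shares its opening move with the paper: both pass to the centralizer $C = C_\Gamma(P) = \ker(\Gamma \to \Aut(P))$, which has finite index, and both recognize the resulting central extension $1 \to Z(P) \to C \to F' \to 1$ with $F' = \phi(C)$ free abelian of finite index in $H$.

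The approaches diverge at the key step of producing a subgroup of $C$ that meets $P$ trivially. The paper observes that $C$ is metabelian and invokes Hall's theorem \cite{Ha59} that finitely generated metabelian groups are residually finite; it then chooses a finite quotient of $C$ that is injective on $Z(P)$ and takes the kernel $\Gamma''$, so that $\phi|_{\Gamma''}$ is injective and $F := \phi(\Gamma'')$ works. Your argument is more hands-on: you note that $C$ is in fact nilpotent of class at most two (since $[C,C] \le Z(P) \le Z(C)$), and exploit the commutator identity $[x^e,y] = [x,y]^e$ in such groups to show that $e$-th powers of lifts of a basis of $F'$ pairwise commute, where $e$ is the exponent of $Z(P)$. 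This yields an explicit free abelian subgroup $F \le C$ with $\phi(F) = eF'$.

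Your route avoids the citation to residual finiteness and is entirely self-contained; it also gives an explicit upper bound on $[H:F]$, namely $e^{\mathrm{rk}(H)} \cdot [H:F']$. The paper's route is shorter to state once one is willing to quote Hall's theorem, but gives no control on the index. Both verifications of properties (1)--(3) from the chosen splitting are essentially the same.
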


\begin{remark}\label{remark:group_theoretic_remark}
In the simplest case that $H = \Z$, such as when we make applications to knot concordance, $\Z$ is free and so $\G$ splits as
\[\G = \Z \ltimes P = \langle t \rangle \ltimes P.\]
To find a finite index normal subgroup which is a product, with the properties in Lemma \ref{Lemma:group_theoretic_lemma}, define $l:= |\Aut(P)|$.   Then
\[\langle t^l \rangle \times P \leq \langle t \rangle \ltimes P\]
is a subgroup as required.  Lemma \ref{Lemma:group_theoretic_lemma} is an extension of this idea.
\end{remark}

\begin{proof}[Proof of Lemma \ref{Lemma:group_theoretic_lemma}]
We consider the following homomorphism
\[ \ba{rcl} \varphi\colon\G & \to & \Aut(P) \\
g&\mapsto & (h\mapsto ghg^{-1}). \ea \]
We denote by $\G'$ its kernel. Note that $\G'\subset \G$ is a normal subgroup of finite index.

We denote by $H'$ the image of $\phi$ restricted to $\G'$. Note that the kernel $P'$ of $\phi$ restricted to $\G'$ is the intersection of $P$ with $\G'$.
Summarizing we have
\[\G' := \ker(\varphi),\;\; H' := \im(\phi|_{\G'})\mbox{ and } P' := \ker(\phi|_{\G'}).\]
From the definition of $\varphi$ it follows immediately that $P'$ is precisely the center of $P$. In particular $P'$ is an abelian group.
It now follows that $\G'$ fits into the following short exact sequence
\[ 1\to P'\to \G'\xrightarrow{\phi} H'\to 0;\]
in particular  $\G'$ is metabelian. Note that by \cite[Theorem~1]{Ha59} any (finitely generated) metabelian group is residually finite.
(Recall that a group $\pi$ is called \emph{residually finite} if
for any non-trivial $g\in \pi$ there exists a homomorphism $\g\colon\pi\to G$ to a finite group $G$ such that $\g(g)$ is non-trivial.)
By applying the definition of residually finite to all non-trivial elements of $P'\subset \G'$ and by taking the direct product of the resulting homomorphisms to finite groups we obtain a homomorphism
$\g\colon\G'\to G$ to a finite group $G$ such that $\g(g)$ is non-trivial for any $g\in P'$.
We now write $\G'':=\ker\{\g\colon \G'\to G\}$. Note that by definition $\G''\cap P'$ is trivial, which implies that the restriction of $\phi$ to $\G''$ is injective.
We also write $F:=\phi(\G'')$.
We summarize the various groups in the following diagram
\[ \ba{cccccccccc}
1&\to & P&\to &\G&\xrightarrow{\phi} & H &\to &0\\
&&\cup &&\cup && \cup&& \\
1&\to & P'&\to &\G'&\xrightarrow{\phi} & H' &\to &0\\
&&\cup &&\cup && \cup&& \\
& & 1&\to &\G''&\xrightarrow{\phi} & F &\to &0.\ea \]
We  thus obtain an injective map  $F\xrightarrow{\phi|_{\G''}^{-1}} \G''\subset \G$ which we denote by $\psi$.
We now consider the following map:
\[ \ba{rcl} \Psi\colon F\times P&\to &\G\\
(f,p)&\mapsto &  \psi(f)\cdot p.\ea \]
We claim that this map has all the desired properties.
First note that this map is indeed a group homomorphism since for any $f\in F$ we have
\[ \psi(f)\in \G'=\{ g\in \G\,|\, gpg^{-1}=p\mbox{ for all }p\in P\}, \]
i.e. the images of $P$ and $F$ commute in $\G$.
The map is furthermore evidently the identity on $P$ and  the concatenation
\[ F\to F\times P\to \G\xrightarrow{\phi} H\]
maps $F$ isomorphically onto $F\subset H$.

Also note that $\Psi$ is injective. Indeed, suppose we have a pair $(f,p)$ such that $\Psi(f,p)$ is trivial. It follows that  $0=\phi(\Psi(f,p))=\phi(\psi(f)p)=f$ which implies that $(f,p)=(0,p)$.
But $\Psi$ is clearly injective on $P$.

Finally note that
\[ \Psi(F\times P)=\ker\{\G\xrightarrow{\phi} H\to H/F\}.\]
Indeed, it is clear that the left hand side is contained in the right hand side. On the other hand, if $g\in\ker\{\G\xrightarrow{\phi} H\to H/F\}$, then
$\phi(g)=f$ for some $f\in F$ and hence $g\psi(f^{-1})\in P$, i.e. $g=\Psi(\psi(f^{-1}),p)$.
It thus follows that the image of $\Psi$ is indeed a finite index normal subgroup of $\G$.
\end{proof}

We now state and prove a few more elementary lemmas which will be useful in the proof of our main theorem.

\begin{lemma}\label{Lemma:Gamma_l}
Let $A$ be a group and let $B \leq A$ be a finite index subgroup of index $l$, with right coset representatives $\{ t_0, t_1,\dots,t_{l-1}\} \subset A$
so that the quotient is $B \backslash A = \{Bt_i \, |\, i= 0,1,\dots,l-1\}$.   Then $\Z[A]$ is free as a left $\Z[B]$-module with basis $T$.
\end{lemma}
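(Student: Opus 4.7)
The plan is to produce an explicit $\Z[B]$-basis of $\Z[A]$ from the coset decomposition of $A$. The statement is essentially a restatement of the fact that a group decomposes as a disjoint union of cosets of any subgroup, so there is no real obstacle; the only work is verifying freeness on the nose.

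First I would recall that, since $\{t_0,\dots,t_{l-1}\}$ is a set of right coset representatives, we have a disjoint union
\[A = \bigsqcup_{i=0}^{l-1} B t_i.\]
In particular, every element $a \in A$ can be written \emph{uniquely} as $a = b_a \cdot t_{i(a)}$ for some $b_a \in B$ and some index $i(a) \in \{0, \dots, l-1\}$. Extending this set-theoretic bijection $A \to \bigsqcup_i B \times \{i\}$ by $\Z$-linearity furnishes an isomorphism of abelian groups
\[\Phi \colon \Z[A] \xrightarrow{\;\cong\;} \bigoplus_{i=0}^{l-1} \Z[B]\cdot t_i,\qquad \sum_a n_a\, a \;\longmapsto\; \sum_a n_a\, b_a \cdot t_{i(a)}.\]

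Next I would verify that $\Phi$ is $\Z[B]$-linear, where the left $\Z[B]$-action on the target is by left multiplication on each summand. Since the action on both sides is $\Z$-linear, it suffices to check on group elements: for $b \in B$ and $a = b_a t_{i(a)} \in A$, we have $ba = (bb_a) t_{i(a)}$, and $bb_a \in B$, so $\Phi(ba) = (bb_a) \cdot t_{i(a)} = b \cdot \Phi(a)$ as required.

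Finally, spanning follows because every $a \in A$ lies in $\Z[B]\cdot t_{i(a)}$, and $\Z$-linear independence of the $t_i$ over $\Z[B]$ follows from injectivity of $\Phi$: if $\sum_{i=0}^{l-1} b_i \cdot t_i = 0$ in $\Z[A]$ with $b_i \in \Z[B]$, then applying $\Phi$ gives $b_i = 0$ for every $i$, since the sets $B t_i$ are pairwise disjoint so no cancellation between different cosets is possible. Hence $T = \{t_0,\dots,t_{l-1}\}$ is a free $\Z[B]$-basis for $\Z[A]$, completing the proof.
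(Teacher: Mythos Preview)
Your proof is correct and follows essentially the same approach as the paper: the paper simply writes down the map $\Z[B]^l \to \Z[A]$, $(b_0,\dots,b_{l-1}) \mapsto \sum b_i t_i$, and declares it an isomorphism of left $\Z[B]$-modules without further comment, while you supply the verification by constructing the inverse from the unique coset decomposition $a = b_a t_{i(a)}$.
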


\begin{proof}
It is straightforward to verify that the following  homomorphism of left $\Z[B]$-modules is an isomorphism:
\[\ba{rcl} \Z[B]^l & \xrightarrow{\simeq} & \Z[A] \\ (b_0,\dots,b_{l-1}) & \mapsto & b_0t_0 + b_1t_1 + \cdots + b_{l-1}t_{l-1}.
\ea\]
\end{proof}

Let $\phi:\G\to H$ be an epimorphism onto a free abelian group and let $\a:\G \to \GL(k,Q)$ be a representation over a field such that $\a|_{\ker \phi}$ factors through a $p$-group and such that $\ker(\a \times \phi)$ is trivial.  We apply Lemma \ref{Lemma:group_theoretic_lemma} and view the resulting group $F\times P$ as a subgroup of $\G$.  We now have the following lemma.

\begin{lemma}\label{Lemma:keystep}
There exists an isomorphism of right $\Z[\G]$-modules
\[ Q^k\otimes_Q Q[H]\to (Q^k\otimes_Q Q[F])\otimes_{\Z[F\times P]}\Z[\G],\]
where the right $\Z[\G]$-module structure on $Q^k\otimes_Q Q[H]$ is given by $\a\otimes \phi$,
the right $\Z[F\times P]$-module structure on $Q^k\otimes_Q Q[F]$ is given by $\a\otimes \phi$ restricted to $F\times P$,
and the right $\Z[\G]$-structure on $(Q^k\otimes_Q Q[F])\otimes_{\Z[F\times P]}\Z[\G]$ is given by right multiplication.
\end{lemma}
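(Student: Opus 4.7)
The plan is to exhibit the stated isomorphism as the inverse of a map going in the opposite direction. Define
\[ \eta \colon (Q^k \otimes_Q Q[F]) \otimes_{\Z[F \times P]} \Z[\G] \to Q^k \otimes_Q Q[H] \]
by the formula $(v \otimes f) \otimes g \mapsto v \a(g) \otimes f \phi(g)$, and aim to show $\eta$ is a right $\Z[\G]$-module isomorphism; the desired map is then $\eta^{-1}$.

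First I would verify well-definedness across the tensor balance $\otimes_{\Z[F\times P]}$. Given $(f_0, p) \in F \times P$, which embeds into $\G$ as $\psi(f_0) p$, both $\eta\bigl( ((v \otimes f) \cdot (f_0,p)) \otimes g \bigr)$ and $\eta\bigl( (v \otimes f) \otimes \psi(f_0) p \cdot g \bigr)$ reduce to $v \a(\psi(f_0) p g) \otimes f f_0 \phi(g)$, using property (2) of Lemma \ref{Lemma:group_theoretic_lemma} (which gives $\phi(\psi(f_0)) = f_0$) together with $P \subseteq \ker \phi$. That $\eta$ is a right $\Z[\G]$-module map follows immediately from the multiplicativity of $\a$ and $\phi$.

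For the isomorphism claim, choose right coset representatives $g_0, \ldots, g_{l-1}$ of $F \times P$ in $\G$. Property (3) of Lemma \ref{Lemma:group_theoretic_lemma} identifies $\G / (F \times P)$ with $H/F$ via $\phi$, so the elements $h_i := \phi(g_i)$ form right coset representatives of $F$ in $H$. Lemma \ref{Lemma:Gamma_l} then presents $\Z[\G]$ as a free left $\Z[F \times P]$-module with basis $\{g_i\}$, yielding
\[ (Q^k \otimes_Q Q[F]) \otimes_{\Z[F \times P]} \Z[\G] = \bigoplus_{i=0}^{l-1} (Q^k \otimes_Q Q[F]) \otimes g_i, \]
and similarly $Q^k \otimes_Q Q[H] = \bigoplus_i Q^k \otimes_Q Q[F] h_i$ as $Q$-vector spaces. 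The map $\eta$ respects this splitting summand by summand, and on the $i$-th summand it is, after the identification $Q[F] h_i \cong Q[F]$ via right multiplication by $h_i$, the automorphism $\a(g_i) \otimes \Id_{Q[F]}$, which is invertible because $\a(g_i) \in \GL(k, Q)$. Therefore $\eta$ is a $Q$-linear isomorphism, and, combined with its being a right $\Z[\G]$-module map, this completes the proof.

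The main obstacle is bookkeeping rather than conceptual: the whole argument is an instance of the standard principle that induction from a finite-index subgroup recovers the original representation. The essential input making this go through in the stated form is property (3) of Lemma \ref{Lemma:group_theoretic_lemma}, which ensures that representatives of $F \times P$ in $\G$ descend under $\phi$ to representatives of $F$ in $H$; without this alignment, the summand-by-summand identification between the two sides would not be available.
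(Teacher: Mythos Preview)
Your proof is correct and follows essentially the same route as the paper: you define the same map $\eta$ (the paper calls it $\theta$) going in the opposite direction, verify well-definedness over $\Z[F\times P]$ and $\Z[\G]$-equivariance, and then establish bijectivity using the coset decompositions of $\Z[\G]$ over $\Z[F\times P]$ and of $Q[H]$ over $Q[F]$. The only minor difference is that the paper verifies the isomorphism by writing down an explicit inverse $\rho$ and invoking a $Q[F]$-rank count, whereas you check invertibility directly on each summand via $\a(g_i)\in\GL(k,Q)$; these are interchangeable tail-ends of the same argument.
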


%
%

\begin{proof}
Consider the following map:
\[\ba{rcl} \theta \colon Q[F]^k \otimes_{\Z[F \times P]} \Z[\G] & \to & Q[H]^k \\
(p_1,\dots,p_k) \otimes g & \mapsto & (p_1,\dots,p_k) \cdot g.
\ea\]
To see that this is well-defined, suppose that $g \in \Z[\fp] \subset \Z[\G]$.  Then
\begin{eqnarray*} \theta \left((p_1,\dots,p_k) \otimes gh \right) &=& (p_1,\dots,p_k) \cdot gh \\ & = & (p_1,\dots,p_k) \cdot g \cdot h \\ & = & \theta \left((p_1,\dots,p_k) \cdot g \otimes h \right).\end{eqnarray*}
It is clear that $\theta$  preserves the right $\Z[\G]$-module structure.

We pick representatives $t_0,\dots,t_{l-1}$ for $(F\times P) \backslash \G$. Note that by   Lemma \ref{Lemma:Gamma_l}  $Q[H]$ is free as a  $Q[F]$-module, with basis $\{\phi(t_i) \, | \, i= 0,\dots,l-1\}$. We write  $x_i := \phi(t_i)$.
Given $(h_1,\dots,h_k) \in Q[H]^k$, there exist unique $b_{ij} \in Q[F]$ such that
\[(h_1,\dots,h_k) = \left(\sum_{i=0}^{l-1} b_{i1} x_i,\dots, \sum_{i=0}^{l-1} b_{ik} x_i\right).\]
Then define:
\[\ba{rcl} \rho \colon Q[H]^k & \to & Q[F]^k \otimes_{\Z[F \times P]} \Z[\G]\\
(h_1,\dots,h_k) & \mapsto & \sum_{i=0}^{l-1}(b_{i1},\dots,b_{ik}) \a(t_i^{-1}) \otimes t_i.
\ea\]
It is straightforward to verify, that $\theta \circ \rho$ is the identity, in particular $\theta$ is surjective.
Since  $\theta$ is a $Q[F]$-module homomorphism between
free $Q[F]$-modules of rank $k\cdot [H:F]=k\cdot [\G:(F\times P)]$ it now follows that $\theta$ is in fact an isomorphism.

Alternatively one can easily show that $\rho$ defines an inverse to $\theta$.

\end{proof}

\subsection{Maschke's Theorem}

In order to show that after taking the representation $\a \otimes \phi$, we still have an injective map, we will need some elementary representation theory.

\begin{definition}
We say that a right $S$-module $M$ is \emph{semisimple} if every submodule is a direct summand.
\end{definition}

\begin{theorem}[Maschke, \cite{La02} Theorem XVIII.1.2]\label{Thm:Maschke}
Let $G$ be a finite group and $\mathbb{F}$ a field of characteristic zero. Then $\F[G]$, the group algebra of G, is semisimple.
\end{theorem}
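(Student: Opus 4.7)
The plan is to prove semisimplicity directly from the definition by the classical averaging argument. Given an $\F[G]$-submodule $N$ of any $\F[G]$-module $M$, it suffices to produce an $\F[G]$-linear projection $\pi_0 \colon M \to N$; its kernel will then provide the desired direct complement, since for any $m \in M$ one writes $m = \pi_0(m) + (m - \pi_0(m))$ and these summands lie in $N$ and $\ker \pi_0$ respectively.

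The starting point is that $N$ is in particular an $\F$-subspace of $M$, so elementary linear algebra yields an $\F$-linear (but not a priori $G$-equivariant) projection $\pi \colon M \to N$. To upgrade $\pi$ to an $\F[G]$-linear map I would form the averaged map
\[ \widetilde{\pi}(m) := \frac{1}{|G|} \sum_{g \in G} g \cdot \pi(g^{-1} \cdot m). \]
Verifying $\F[G]$-linearity amounts to a routine reindexing $g \mapsto hg$ inside the sum for $h \in G$, while $\widetilde{\pi}|_N = \Id_N$ follows from the $G$-invariance of $N$ together with $\pi|_N = \Id_N$, since each summand $g \cdot \pi(g^{-1} \cdot n) = g \cdot g^{-1} \cdot n = n$ for $n \in N$. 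Thus $\widetilde{\pi}$ is an $\F[G]$-linear projection onto $N$, and $M = N \oplus \ker \widetilde{\pi}$ follows.

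The only genuine content is that $|G|$ is invertible in $\F$, which is where the hypothesis on the characteristic enters: since $\F$ has characteristic zero, $|G| \ne 0$ in $\F$ and hence $\frac{1}{|G|}$ makes sense. This is both the crucial ingredient and the sole obstacle; in positive characteristic dividing $|G|$, group algebras are famously non-semisimple, so the averaging trick is essentially the whole story. Since the paper only needs the statement as a cited black box from Lang, a more detailed write-up than the sketch above seems unnecessary.
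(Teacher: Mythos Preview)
Your averaging argument is correct and is exactly the classical proof of Maschke's theorem; the paper itself does not give a proof but simply cites Lang \cite[Theorem~XVIII.1.2]{La02}, where this same averaging construction appears. Since the paper treats the result as a black box, your sketch more than suffices.
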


\begin{corollary}\label{Cor:submodule_direct_summand}
Let $\F$ be a field of characteristic zero, let $G$ be a finite group and let $V$ be a finitely generated right $\F[G]$-module.  Then $V$ is isomorphic to a direct summand of $\F[G]^n$ for some $n$.
\end{corollary}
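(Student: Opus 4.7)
The strategy is to realize $V$ as a quotient of a finitely generated free $\F[G]$-module and then split the resulting short exact sequence using Maschke's theorem. Since $V$ is finitely generated as an $\F[G]$-module, I pick generators $v_1,\dots,v_n$ and form the surjection
\[ \pi \colon \F[G]^n \twoheadrightarrow V, \qquad e_i \mapsto v_i, \]
giving a short exact sequence
\[ 0 \to K \to \F[G]^n \xrightarrow{\pi} V \to 0 \]
of right $\F[G]$-modules, where $K := \ker \pi$. It now suffices to show that this sequence splits: a splitting provides an isomorphism $\F[G]^n \cong K \oplus V$, identifying $V$ with a direct summand.

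To split the sequence, I would argue that $\F[G]^n$ is semisimple in the sense of the paper's definition, i.e. every submodule of it is a direct summand. By Theorem \ref{Thm:Maschke} we know that $\F[G]$, viewed as a right module over itself, is semisimple. A routine induction on $n$ then shows that any finite direct sum of semisimple modules is again semisimple: given any submodule $W \subseteq M_1 \oplus M_2$ of a direct sum of two semisimple modules, one projects $W$ onto each factor, splits off complements there, and assembles a complement to $W$. Applying this to $\F[G]^n = \F[G] \oplus \cdots \oplus \F[G]$ shows that every submodule of $\F[G]^n$, in particular $K$, admits a direct summand complement $W$ with $\F[G]^n = K \oplus W$.

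Finally, the restriction $\pi|_W \colon W \to V$ is an isomorphism of right $\F[G]$-modules (it is injective because $W \cap K = 0$, and surjective because $\pi$ is), so $V \cong W$ is a direct summand of $\F[G]^n$, as required.

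The only non-trivial step is the lemma that finite direct sums of semisimple modules are semisimple, but this is a completely standard fact from module theory and the most natural way to bridge the gap between the form of Maschke's theorem quoted in the paper and the conclusion about arbitrary finitely generated modules. I do not expect any genuine obstacle.
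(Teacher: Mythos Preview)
Your overall strategy is exactly the paper's: realize $V$ as a quotient of $\F[G]^n$, observe that $\F[G]^n$ is semisimple, split off the kernel, and identify the complement with $V$. The only place your argument and the paper's differ is in how you justify that $\F[G]^n$ is semisimple, and here your sketch has a genuine (if easily repaired) flaw.

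You propose to prove that a direct sum $M_1 \oplus M_2$ of semisimple modules is semisimple by projecting a submodule $W$ onto each factor, taking complements of the images there, and ``assembling a complement to $W$''. This does not work. Take $M_1 = M_2 = \F$ with trivial $G$-action and let $W$ be the diagonal $\{(x,x)\}$. Then the projection of $W$ onto each factor is all of $\F$, so the only available complements are zero, and you cannot assemble a complement to $W$ from them. The result you want is of course true and standard, but the clean argument goes through the characterization that a module is semisimple if and only if it is a sum of simple submodules; a direct sum of such modules is then obviously again a sum of simples. This is precisely the route the paper takes, citing Lang \cite[Section~XVII~\S2]{La02}. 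Replace your inductive sketch with that one line and the proof is complete and matches the paper's.
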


\begin{proof}
Since $V$ is a finitely generated right $\F[G]$-module there exists an $n\in \N$ and an epimorphism $\phi:\F[G]^n\to V$.
We denote by $N$ the kernel of this map. Note that $\F[G]$ is semisimple by Maschke's theorem.
By \cite[Section~XVII~\S2]{La02} a module is semisimple if and only if it is the sum of a family of simple modules.
It follows that $\F[G]^n$ is also semisimple. In particular $N$ is a direct summand of $\F[G]^n$, i.e.  $\F[G]^n \cong N \oplus M$ for some $\F[G]$-module $M$. It is straightforward to verify that $\phi$ restricts to an isomorphism $M\to V$.
\end{proof}

\section{Proof of the Main Theorem}\label{Section:main_proof}

We are now in a position to prove our main theorem. For the reader's convenience we recall the statement:

\begin{theorem}\label{Main_Theorem_2}
Let $\pi$ be a group, let $p$ be a prime and let $f \colon M \to N$ be a morphism of projective left $\Z[\pi]$-modules such that
\[\Id \otimes f\colon \Z_p \otimes_{\Z[\pi]} M \to \Z_p \otimes_{\Z[\pi]} N\]
is injective.  Let $\phi \colon \pi \to H$ be an epimorphism onto a torsion-free abelian group and let $\a \colon \pi \to \GL(k,Q)$ be a representation, with $Q$ a field of characteristic zero.  If $\a|_{\ker (\phi)}$ factors through a $p$-group, then
\[\Id\otimes f\colon Q[H]^k\otimes_{\Z[\pi]} M \to Q[H]^k\otimes_{\Z[\pi]} N\]
is also injective.
\end{theorem}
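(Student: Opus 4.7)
My plan is to strip off successive base changes until Maschke's theorem applies.  First I would pass to the quotient $\G := \pi/\ker(\a \times \phi)$.  Since $\Z[\pi]$ acts on $Q[H]^k$ through $\G$, it suffices to prove injectivity of $\Id \otimes f \colon \Z[\G] \otimes_{\Z[\pi]} M \to \Z[\G] \otimes_{\Z[\pi]} N$ and then work entirely over $\Z[\G]$.  The natural isomorphism $\Z_p \otimes_{\Z[\G]} (\Z[\G] \otimes_{\Z[\pi]} -) \cong \Z_p \otimes_{\Z[\pi]} -$ together with the hypothesis gives injectivity after $\Z_p \otimes_{\Z[\G]} -$; Lemma \ref{Lemma:GammaDZp} and the definition of Strebel's class then upgrade this to injectivity over $\Z[\G]$.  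Next I would invoke Lemma \ref{Lemma:group_theoretic_lemma} to obtain a finite-index subgroup $F \times P \hookrightarrow \G$.  By Lemma \ref{Lemma:Gamma_l}, $\Z[\G]$ is free, hence faithfully flat, over $\Z[F \times P]$, so $\Z[\G] \otimes_{\Z[\pi]} M$ and $\Z[\G] \otimes_{\Z[\pi]} N$ become projective $\Z[F \times P]$-modules and the map between them remains injective.

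Lemma \ref{Lemma:keystep} supplies the right $\Z[\G]$-module isomorphism
\[Q[H]^k \cong (Q^k \otimes_Q Q[F]) \otimes_{\Z[F \times P]} \Z[\G],\]
so a change-of-rings identifies the target map $\Id \otimes f$ with $(Q^k \otimes_Q Q[F]) \otimes_{\Z[F \times P]} (\Z[\G] \otimes_{\Z[\pi]} f)$.  Since projective $\Z[F \times P]$-modules are $\Z$-torsion-free and $Q$ is $\Z$-flat, base change from $\Z[F \times P]$ to $Q[F \times P] = Q \otimes_\Z \Z[F \times P]$ preserves both injectivity and projectivity, so the task reduces to showing that $Q^k \otimes_Q Q[F]$ is flat over $Q[F \times P]$.

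This last step is the main obstacle, because the twisted action $(v \otimes q)(f,p) = v \cdot \a(\psi(f))\a(p) \otimes qf$ entangles $F$ with both tensor factors.  I plan to untwist it via the intertwiner $\Theta \colon v \otimes f \longmapsto v \cdot \a(\psi(f)) \otimes f$, which converts the twisted action into the product action where $F$ acts only on $Q[F]$ by translation and $P$ acts only on $Q^k$ via $\a$.  The critical input is that $\psi(F)$ centralises $P$ inside $\G$ by the construction in Lemma \ref{Lemma:group_theoretic_lemma}, so that $\a(\psi(f))$ and $\a(p)$ commute and $\Theta$ is a genuine isomorphism of right $Q[F \times P]$-modules.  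Under the product action $Q[F \times P] = Q[F] \otimes_Q Q[P]$ acts factorwise, and since $P$ is a finite $p$-group and $Q$ has characteristic zero, Maschke's theorem via Corollary \ref{Cor:submodule_direct_summand} exhibits $Q^k$ as a direct summand of $Q[P]^m$ for some $m$.  Tensoring with $Q[F]$ over $Q$ then realises $Q^k \otimes_Q Q[F]$ as a direct summand of $Q[F \times P]^m$, hence as projective and in particular flat over $Q[F \times P]$.  Tensoring a flat module with an injection preserves injectivity, closing the chain of reductions and yielding the theorem.
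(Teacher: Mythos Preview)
Your reduction steps through $\G$, Strebel's class, the finite-index subgroup $F\times P$, and Lemma~\ref{Lemma:keystep} match the paper exactly.  The divergence comes at the end: the paper does \emph{not} untwist the $\Z[F\times P]$-action on $Q[F]^k$.  Instead it keeps the twisted action, base-changes along $\Z[F]\hookrightarrow Q[F]\hookrightarrow Q(F)$ to form $V'=Q(F)\otimes_{Q[F]}Q[F]^k$ as a right $Q(F)[P]$-module, and applies Maschke over the field $Q(F)$ to realise $V'$ as a summand of $Q(F)[P]^n$.  A long commutative diagram (with the flatness of $Q(F)$ over $Q[F]$ supplying the one nontrivial injection) then transports the injectivity of $M'\to N'$ down to $Q[H]^k\otimes_{\Z[\G]}M\to Q[H]^k\otimes_{\Z[\G]}N$.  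Your untwisting trick is more economical: once the action is a genuine product you can apply Maschke directly over $Q$ to the $Q[P]$-module $Q^k$, and you never need the fraction field $Q(F)$ or the large diagram.  The paper's route, on the other hand, avoids having to produce an explicit intertwiner and hence is slightly more robust to bookkeeping errors.

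Speaking of which: your formula for $\Theta$ has the exponent wrong.  With $\Theta(v\otimes g)=v\cdot\a(\psi(g))\otimes g$ one computes
\[
\Theta\big((v\otimes g)\cdot_{\mathrm{tw}}(f,p)\big)=v\,\a(\psi(f))\a(p)\a(\psi(gf))\otimes gf,
\]
which, even after using that $\a(\psi(F))$ commutes with $\a(P)$ and is abelian, equals $v\,\a(p)\a(\psi(g))\a(\psi(f))^{2}\otimes gf$ and not $\Theta(v\otimes g)\cdot_{\mathrm{pr}}(f,p)=v\,\a(\psi(g))\a(p)\otimes gf$.  The map that actually intertwines the twisted and product actions is $\Theta(v\otimes g)=v\cdot\a(\psi(g))^{-1}\otimes g$; with this correction your argument goes through verbatim.
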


We quickly summarize the key ideas of the proof:
\bn
\item Using Lemmas \ref{Lemma:group_theoretic_lemma}, \ref{Lemma:Gamma_l}
and \ref{Lemma:keystep} we can reduce the proof of  the Theorem  to the case that $\pi=F\times P$ and $F=H$.
\item The group ring $\Z[F\times P]$ can be viewed as $\Z[F][P]$, i.e. the group ring of the finite group $P$ over the commutative ring $\Z[F]$.
\item The quotient field $Q(F)$ over $Q[F]$ is flat over $\Z[F]$, we can thus tensor up and it suffices to consider modules over $Q(F)[P]$.
\item The conclusion then follows from an application of the results of Strebel and of Maschke's theorem.
\en
We now move on to providing the full details of the argument.

\begin{proof}
We define $\G:=\pi/\ker(\a \times \phi)$. Let $V$ be a right $\Z[\pi]$-module such that $\pi\to \Aut(V)$ factors
 through $\pi\to \G$, then we can view $V$ as a right module over $\Z[\G]$ and we obtain the following commutative diagram
\[ \xymatrix{  V\otimes_{\Z[\pi]} M \ar[d]\ar[r]^f & V\otimes_{\Z[\pi]} N \ar[d]\\
 V\otimes_{\Z[\G]} (\Z[\G] \otimes_{\Z[\pi]} M)\ar[r]^f & V \otimes_{\Z[\G]} (\Z[\G] \otimes_{\Z[\pi]} N).}\]
The vertical maps are easily seen to be  isomorphisms. Also note that $\Z[\G] \otimes_{\Z[\pi]} M$ and $\Z[\G] \otimes_{\Z[\pi]} N$
are projective $\Z[\G]$-modules. Applying these  observations to $V=\Z_p$ and $V=Q[H]^k$ now shows that without loss of generality we can assume that $\G=\pi$.

Since $\G \in D(\Z_p)$ by Lemma \ref{Lemma:GammaDZp}, Definition \ref{defn:Strebel} applies, and we have that
\[\Id \otimes f \colon \Z_p \otimes_{\Z} M \to \Z_p \otimes_{\Z} N\] is injective.
Since $M$ and $N$ are $\Z$-torsion free it follows that $f \colon M \to N$ is also injective.

We now apply Lemma \ref{Lemma:group_theoretic_lemma}. By a slight abuse of notation we view the resulting group $F\times P$ as a subgroup of $\G$.  By Lemma \ref{Lemma:Gamma_l} we can now also view $M$ and $N$ as projective left $\Z[\fp]=\Z[F][P]$-modules.
In particular we can view $M$ and $N$ as $\Z[F]$-modules.
We now write $M'=Q(F)[P] \otimes_{\Z[\fp]} M$, $N'=Q(F)[P] \otimes_{\Z[\fp]} N$ and we consider the following commutative diagram:
\[\xymatrix{
M \ar@^{(->}[r]\ar[r]  \ar@^{(->}[d]\ar[d]& N  \ar@^{(->}[d]\ar[d]\vspace{0.3cm}\\
Q\otimes_\Z M \ar[r]   \ar[d]^\cong &Q\otimes_\Z N \ar[d]^\cong \\
Q[F]\otimes_{\Z[F]} M \ar[r] \ar@^{(->}[d]\ar[d]   &Q[F]\otimes_{\Z[F]} N     \ar@^{(->}[d]\ar[d]\\
Q(F)\otimes_{\Z[F]} M \ar[r] \ar[d]^\cong &Q(F)\otimes_{\Z[F]} N \ar[d]^\cong\\
Q(F)[P]\otimes_{\Z[F][P]} M \ar[r] \ar[d]^= &Q(F)[P]\otimes_{\Z[F][P]} N\ar[d]^=\\
M'\ar[r] & N'. }\]
The horizontal maps are induced by $f$ and the vertical maps are the canonical maps. The top horizontal map is injective by the preceding discussion.
 Since $Q$ is a field of characteristic zero, and so as an abelian group is $\Z$-torsion free, it is flat as a $\Z$-module. Also note that $Q(F)$, the quotient field of $Q[F]$, is flat as a $Q[F]$-module.
 It follows that the top  and the third vertical maps are injective. The second  vertical map is clearly an isomorphism and the fourth is the identity map.
 It now follows that the bottom horizontal map is also injective.

Now recall, as in Lemma \ref{Lemma:keystep}, that $V := Q[F]^k$ is a right $\Z[\fp]$-module via the action induced by $\a \otimes \phi$, and a $Q[F]$-bimodule, with the $Q[F]$ action commuting with the $\Z[F \times P]$ action.  Therefore $V':=Q(F) \otimes_{Q[F]} V$ is a right $Q(F)[P]$-module.  By Corollary \ref{Cor:submodule_direct_summand}, applied with $G = P, $  and $\mathbb{F} = Q(F)$, there is an integer $n \in \mathbb{N}$ and a right $Q(F)[P]$-module $W'$ such that
\[(V' \oplus W') \cong Q(F)[P]^n\]
as right $Q(F)[P]$-modules.

\newpage
We now consider the following commutative diagram:
\[ \xymatrix{
(M')^n \ar@^{(->}[r]\ar[r] & (N')^n  \\
Q(F)[P]^n \otimes_{Q(F)[P]} M'  \ar[r]  \ar[u]^\cong & Q(F)[P]^n \otimes_{Q(F)[P]} N' \ar[u]^\cong \\
(V'\oplus W') \otimes_{Q(F)[P]} M' \ar[r]\ar[u]^\cong & (V'\oplus W') \otimes_{Q(F)[P]} N' \ar[u]^\cong \\
V' \otimes_{Q(F)[P]} M'\oplus W' \otimes_{Q(F)[P]} M'  \ar[u]^\cong \ar[r] & V' \otimes_{Q(F)[P]} N'\oplus W' \otimes_{Q(F)[P]} N'   \ar[u]^\cong \\
V' \otimes_{Q(F)[P]} M' \ar[r] \ar@^{(->}[u]\ar[u] & V' \otimes_{Q(F)[P]} N' \ar@^{(->}[u]\ar[u]  \\
V' \otimes_{Q(F)[P]} Q(F)[P]\otimes_{\Z[F\times P]} M \ar[r] \ar[u]^= & V' \otimes_{Q(F)[P]}  Q(F)[P]\otimes_{\Z[F\times P]} N \ar[u]^=  \\
V'\otimes_{\Z[F\times P]} M \ar[r] \ar[u]^\cong & V' \otimes_{\Z[F\times P]} N \ar[u]^\cong  \\
 Q(F) \otimes_{Q[F]} V \otimes_{\Z[\fp]} M \ar[r] \ar[u]^= & Q(F) \otimes_{Q[F]} V \otimes_{\Z[\fp]} N \ar[u]^=  \\
Q[F] \otimes_{Q[F]} V \otimes_{\Z[\fp]} M \ar[r] \ar@^{(->}[u]^{(*)}\ar[u] & Q[F] \otimes_{Q[F]} V \otimes_{\Z[\fp]} N  \ar@^{(->}[u]^{(*)}\ar[u] \\
 V \otimes_{\Z[\fp]} M \ar[r]\ar[u]^{\cong}& V \otimes_{\Z[\fp]} N \ar[u]^{\cong} \\
(Q^k\otimes_Q Q[F]) \otimes_{\Z[\fp]} \Z[\G]\otimes_{\Z[\G]} M \ar[r]\ar[u]^{\cong}& (Q^k\otimes Q[F]) \otimes_{\Z[\fp]} \Z[\G]\otimes_{\Z[\G]} N \ar[u]^{\cong} \\
   (Q^k\otimes_Q Q[H])\otimes_{\Z[\G]} M \ar[r]\ar[u]^{\cong}&  (Q^k \otimes_Q Q[H])\otimes_{\Z[\G]} N \ar[u]^{\cong} \\
   Q[H]^k\otimes_{\Z[\G]} M \ar[r] \ar[u]^\cong& Q[H]^k\otimes_{\Z[\G]} N \ar[u]^\cong }\]
The horizontal maps are induced by $f$ and  the vertical maps are the canonical maps. Except for the bottom map it is clear from the definitions that these vertical maps are monomorphisms or isomorphisms. It follows from Lemma \ref{Lemma:keystep} that the bottom vertical map is also an isomorphism.
Note that the eighth vertical map $(*)$ is injective since $V\otimes_{\Z[F\times P]} M$ and $V\otimes_{\Z[F\times P]} N$ are projective $Q[F]$-modules and $Q(F)$ is flat over $Q[F]$.
Since the top horizontal map is injective it now follows also that the bottom horizontal map is injective.
   This completes the proof of our main theorem.

%
\end{proof}

\section{Application to Concordance Obstructions}\label{Section:application}

In this section we indicate what we see as an intended application of Theorem \ref{Main_Theorem_2}.  In the theory of knot concordance,
 both the knot exterior and the exterior of a slice disk, should one exist, are $\Z$-homology circles.  To define an obstruction theory one uses the idea of homology surgery.
That is, just as was done by Casson and Gordon \cite{CassonGordon}, we take the zero-framed surgery on a knot $M_K$, and take its
$k$-fold cyclic cover $M_k$, for some prime power $k$.  Let $\phi \colon \pi_1(M_K) \to \Z$ be the abelianisation and suppose we have a representation:
\[\a' \colon \pi_1(M_k) \to \GL(1,Q),\]
for a field $Q$ of characteristic zero, which factors through a $p$-group.
The Casson-Gordon obstructions first find a 4-manifold $W$ whose boundary is $sM_k$ for some integer $s$, over which the representation extends, and then use the Witt class of the intersection form of $W$ with coefficients in $Q(\Z)$ to obstruct the possibility of choosing a $W$ which is the $k$-fold cover of a $\Z$-homology circle.  A vital lemma in constructing this obstruction theory is to show that the twisted homology of a chain complex vanishes with $Q(\Z)^k$ coefficients when the $\Z$-homology vanishes, such as is the case for the relative chain complex $C_*(\wt{Y},\wt{S^1})$, when $Y$ is a $\Z$-homology circle.

The following proposition gives a more general result in this direction.

\begin{proposition}\label{prop:lifting_homology_to_covers}
Suppose that $S,Y$ are finite CW-complexes such that there is a map $i \colon S \to Y$ which induces an isomorphism \[i_* \colon H_*(S;\Z_p) \xrightarrow{\simeq} H_*(Y;\Z_p),\]
which, for example, is always the case if $i$ induces a $\Z$-homology equivalence.
 Let $\phi \colon \pi_1(Y) \to H$ be an epimorphism onto a torsion-free abelian group $H$
 and let $\varphi:H\to A$ be an epimorphism onto a finite abelian group.
Let $Y_\varphi$ be the induced  cover of $Y$,
 let $\phi' \colon \pi_1(Y_\varphi) \to H$ be the restriction of $\phi$, and let $\a' \colon \pi_1(Y_\varphi) \to \GL(d,Q)$ be a $d$-dimensional representation to a field $Q$ of characteristic zero, such that $\a'$ restricted to the kernel of $\phi'$ factors through a $p$-group.  Define $S_\varphi$ to be the pull-back cover $S_\varphi := i^*(Y_\varphi)$.  Then \[ i_* \colon H_*(S_\varphi;Q(H)^d) \xrightarrow{\simeq} H_*(Y_\varphi;Q(H)^d)\]
 is an isomorphism.
\end{proposition}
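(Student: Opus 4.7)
The plan is to translate the proposition into an acyclicity statement for a chain complex with twisted coefficients and then apply Theorem \ref{Main_Theorem_2} to a representation of $\pi := \pi_1(Y)$ obtained by inducing $\alpha'$ from $\pi' := \pi_1(Y_\varphi)$. Note that $\pi' = \ker(\varphi \circ \phi)$ is normal in $\pi$ with quotient $A$ and that $\ker(\phi) \subseteq \pi'$, hence $\ker(\phi) = \ker(\phi')$. Let $\tilde Y$ denote the universal cover of $Y$ and set $\tilde S := i^* \tilde Y$; the mapping cone $C_* := \mathrm{Cone}\bigl(C_*(\tilde S) \to C_*(\tilde Y)\bigr)$ is a bounded complex of free $\Z[\pi]$-modules. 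For any right $\Z[\pi']$-module $V$ the standard change-of-rings identity
\[ V \otimes_{\Z[\pi']} C_* \;=\; (V \otimes_{\Z[\pi']} \Z[\pi]) \otimes_{\Z[\pi]} C_* \]
identifies $H_*(Y_\varphi, S_\varphi; V)$ with $H_*(C_* \otimes_{\Z[\pi]} \tilde V)$ for $\tilde V := V \otimes_{\Z[\pi']} \Z[\pi]$. The hypothesis becomes $H_*(C_* \otimes_{\Z[\pi]} \Z_p) = 0$, and taking $V = Q(H)^d$ the conclusion amounts to the acyclicity of $C_* \otimes_{\Z[\pi]} \tilde V$.

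Set $W_0 := Q[H]^d \otimes_{\Z[\pi']} \Z[\pi]$, so that $\tilde V = Q(H) \otimes_{Q[H]} W_0$; by flatness of $Q(H)$ over $Q[H]$ it suffices to prove that $C_* \otimes_{\Z[\pi]} W_0$ is acyclic. Pick right coset representatives $a_1, \dots, a_r$ for $\pi'$ in $\pi$ (with $r = |A|$), put $h_i := \phi(a_i) \in H$, and let $\tilde\alpha := \mathrm{Ind}_{\pi'}^{\pi} \alpha' \colon \pi \to \GL(dr, Q)$ be the induced representation. A direct check shows that the map
\[ \mathbf{e}_j \otimes p \otimes a_i \;\longmapsto\; (\mathbf{e}_j \otimes a_i) \otimes p h_i \]
extends to an isomorphism of right $\Z[\pi]$-modules $W_0 \cong Q[H]^{dr}$, where the target carries the action $\tilde\alpha \otimes \phi$. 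Since $\ker(\phi) \subseteq \pi'$, for $g \in \ker(\phi)$ one has $a_i g a_i^{-1} \in \pi'$ by normality and $\phi(a_i g a_i^{-1}) = \phi(g) = 0$ by commutativity of $H$, so $a_i g a_i^{-1} \in \ker(\phi')$; consequently $\tilde\alpha(g)$ is block-diagonal with blocks in $\alpha'(\ker\phi')$, and $\tilde\alpha|_{\ker(\phi)}$ factors through the $p$-group $P^r$, where $P \supseteq \alpha'(\ker\phi')$ is the $p$-group supplied by hypothesis.

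With these identifications the triple $(\pi, \phi, \tilde\alpha)$ satisfies the hypotheses of Theorem \ref{Main_Theorem_2}. The final step is a chain-complex upgrade of that theorem, namely: if $C_*$ is a bounded complex of projective $\Z[\pi]$-modules such that $C_* \otimes_{\Z[\pi]} \Z_p$ is acyclic, then $C_* \otimes_{\Z[\pi]} Q[H]^k$ is acyclic as well. This follows by the same opening move as in the proof of Theorem \ref{Main_Theorem_2}: pass to $\G := \pi / \ker(\tilde\alpha \times \phi)$, which lies in $D(\Z_p)$ by Lemma \ref{Lemma:GammaDZp}, and invoke the chain-level strengthening of Strebel's lemma (cf.\ \cite[Proposition~2.10]{COT}), according to which a bounded projective $\Z[\G]$-complex that is $\Z_p$-acyclic is contractible; being contractible, it remains so after tensoring with any $\Z[\G]$-module, in particular with $W_0 \cong Q[H]^{dr}$.

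The main obstacle I anticipate is precisely this last upgrade from the single-map statement of Theorem \ref{Main_Theorem_2} to a chain-complex statement; the preceding steps are essentially bookkeeping that realizes $Q(H)^d$-coefficients on the $\pi'$-cover as an induced module on the $\pi$-level to which the Main Theorem's hypotheses apply. Once that upgrade is supplied --- either as a standalone chain-complex extension of the Main Theorem, or by iterative application of Theorem \ref{Main_Theorem_2} to the boundary maps of $C_*$ after reducing to $\G \in D(\Z_p)$ --- the proposition follows.
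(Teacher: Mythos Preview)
Your reduction to the $\pi$-level via the induced representation $\tilde\alpha$ is correct and matches the paper's approach: the identification $Q(H)^d \otimes_{\Z[\pi']} \Z[\pi] \cong Q(H)^{dr}$ with the $\tilde\alpha \otimes \phi$ action, and the verification that $\tilde\alpha|_{\ker\phi}$ factors through a $p$-group, are exactly the paper's Claim and the remark following it.

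The gap is in your final step. The assertion that a bounded projective $\Z[\G]$-complex which is $\Z_p$-acyclic must be contractible is false for $\G \in D(\Z_p)$: already for $\G=\{e\}$ the complex $0 \to \Z \xrightarrow{q} \Z \to 0$ with $\gcd(q,p)=1$ is $\Z_p$-acyclic but not contractible. The argument of \cite[Proposition~2.10]{COT} that you cite works for PTFA groups because $\Z[\G]$ then embeds in a skew field, so an injective endomorphism of a finitely generated free module is automatically an isomorphism; your $\G$ contains a nontrivial $p$-group, $\Z[\G]$ has zero divisors, and this step fails. Relatedly, your intermediate goal that $C_* \otimes_{\Z[\pi]} W_0$ with $W_0\cong Q[H]^{dr}$ be acyclic is too strong and in general false: for $S=S^1 \hookrightarrow Y=S^3\sm\nu K$, $H=\Z$, trivial $\alpha'$, this homology is the rational Alexander module, which is typically nonzero (though $Q[\Z]$-torsion).

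The paper closes this gap by staying with $Q(H)$-coefficients and applying the Main Theorem to a \emph{single} map. Lift a chain contraction $\gamma$ of $C_*(Y,S;\Z_p)$ to $\tilde\gamma$ on $C_*(\tilde Y,\tilde S)$; then $f:=d\tilde\gamma+\tilde\gamma d$ is a $\Z[\pi]$-chain map with $\eps_p(f)=\Id$. Theorem~\ref{Main_Theorem_2} gives that $Q[H]^{dr}\otimes f$ is injective, hence so is $Q(H)^{dr}\otimes f$ by flatness; being an injective endomorphism of a finite-dimensional $Q(H)$-vector space it is an isomorphism, and $\tilde\gamma$ is then a chain homotopy from this isomorphism to zero. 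This is precisely the ``chain-complex upgrade'' you were seeking: it needs only one invocation of the Main Theorem per degree, but crucially requires the passage to the field $Q(H)$ to turn injectivity into bijectivity.
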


The main applications are the cases $S=S^1$ and $Y=S^3\sm \nu K$ or $Y$ a slice disk exterior, $\phi$ the epimorphism onto $\Z=\ll t\rr$, $\varphi$ the epimorphism onto $\Z_k$
and $\a':\pi_1(Y_\varphi)\to \Z/p^r\to \GL(1,Q)$ a non-trivial one-dimensional representation which factors through a cyclic $p$-group.
A direct calculation shows that  $H_*(S_\varphi;Q(t))=0$,  it then follows from our proposition that $ H_*(Y_\varphi;Q(H))=0$ as well.
Note though that this conclusion can not be reached by considering $S_\varphi$ and $Y_\varphi$ alone, since  the map $H_*(S_\varphi;\Z_p)\to H_*(Y_\varphi;\Z_p)$ is \emph{not} an isomorphism.

In a future application we will consider the case where $Y$ is the exterior of a 2-component link with linking number one and $S$ is one of the two boundary tori.

\begin{proof}
By  the long exact sequence in homology it suffices to show that  $H_*(Y_{\varphi},S_{\varphi};Q(H)^d)=0$.
In order to apply our main theorem we first have to reformulate the problem.

We write $k:=|A|$.
Note that $\Z[\pi_1(Y)]$ acts by right multiplication on the $kd$-dimensional $Q$-vector space
\[ Q^k\otimes_{\Z[\pi_1(Y_\varphi)]}\Z[\pi_1(Y)].\]
We denote the resulting representation $\pi_1(Y)\to \GL(kd,Q)$ by $\a$.
Note that $\a$ restricted to the kernel of $\phi$ also factors through a $p$-group.
Furthermore note that $\a\otimes \phi$ now induces a representation $\pi_1(Y)\to \GL(kd,Q(H))$.

\begin{claim}
We have  $H_*(Y_{\varphi},S_{\varphi};Q(H)^d)=H_*(Y,S;Q(H)^{kd})$.
\end{claim}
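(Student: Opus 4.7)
The claim is a Shapiro-type identity for twisted relative homology under the finite cover $(Y_\varphi, S_\varphi) \to (Y,S)$. My plan is to identify both sides with the homology of a single chain complex, tensored with two coefficient modules which differ only by induction of representations.

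Let $\widetilde Y$ denote the universal cover of $Y$ and let $\widehat S \subset \widetilde Y$ be the preimage of $S$. Since $Y_\varphi \to Y$ is itself a covering, $\widetilde Y$ also serves as the universal cover of $Y_\varphi$, now carrying the restricted deck action by $\pi_1(Y_\varphi) \subset \pi_1(Y)$, and the preimage of $S_\varphi$ in $\widetilde Y$ is again $\widehat S$. Writing $C_* := C_*(\widetilde Y, \widehat S)$, a complex of free left $\Z[\pi_1(Y)]$-modules, both sides of the claim are computed by
\[
 Q(H)^d \otimes_{\Z[\pi_1(Y_\varphi)]} C_* \quad \text{and} \quad Q(H)^{kd} \otimes_{\Z[\pi_1(Y)]} C_*,
\]
respectively. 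Because $[\pi_1(Y) : \pi_1(Y_\varphi)] = |A| = k$, Lemma~\ref{Lemma:Gamma_l} gives that $\Z[\pi_1(Y)]$ is free of rank $k$ as a left $\Z[\pi_1(Y_\varphi)]$-module, and associativity of the tensor product yields
\[
 Q(H)^d \otimes_{\Z[\pi_1(Y_\varphi)]} C_* \;\cong\; \bigl(Q(H)^d \otimes_{\Z[\pi_1(Y_\varphi)]} \Z[\pi_1(Y)]\bigr) \otimes_{\Z[\pi_1(Y)]} C_*.
\]
So it suffices to exhibit an isomorphism of right $\Z[\pi_1(Y)]$-modules
\[
 Q(H)^d \otimes_{\Z[\pi_1(Y_\varphi)]} \Z[\pi_1(Y)] \;\cong\; Q(H)^{kd},
\]
where the left uses $\a' \otimes \phi'$ together with right multiplication by $\Z[\pi_1(Y)]$, and the right uses $\a \otimes \phi$.

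This identification is an instance of the projection formula for induced representations. Writing $Q(H)^d = Q^d \otimes_Q Q(H)$ and using $\phi' = \phi|_{\pi_1(Y_\varphi)}$, the map
\[
 \bigl(Q^d \otimes_Q Q(H)\bigr) \otimes_{\Z[\pi_1(Y_\varphi)]} \Z[\pi_1(Y)] \;\longrightarrow\; \bigl(Q^d \otimes_{\Z[\pi_1(Y_\varphi)]} \Z[\pi_1(Y)]\bigr) \otimes_Q Q(H)
\]
sending $(v \otimes q) \otimes g \mapsto (v \otimes g) \otimes q \cdot \phi(g)$ is well defined and has inverse $(v \otimes g) \otimes q \mapsto (v \otimes q \cdot \phi(g^{-1})) \otimes g$. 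By construction it intertwines the right $\pi_1(Y)$-action on the source with the diagonal $\a \otimes \phi$-action on the target; and since $\a$ is by definition the right $\pi_1(Y)$-action on the induced module $Q^d \otimes_{\Z[\pi_1(Y_\varphi)]} \Z[\pi_1(Y)]$, the target is precisely $Q(H)^{kd}$ equipped with $\a \otimes \phi$.

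The only substantive point is the verification of this projection-formula isomorphism: one must check that the displayed map respects the $\Z[\pi_1(Y_\varphi)]$-balancing on the source and is $\pi_1(Y)$-equivariant for the two different actions. Picking a set of coset representatives $g_1, \dots, g_k$ of $\pi_1(Y_\varphi) \backslash \pi_1(Y)$ reduces both checks to a direct computation, very much in the spirit of Lemma~\ref{Lemma:keystep}. Everything else is pure change-of-rings bookkeeping, and combining the two displayed isomorphisms at the level of chain complexes gives the claimed identification $H_*(Y_\varphi, S_\varphi; Q(H)^d) = H_*(Y, S; Q(H)^{kd})$.
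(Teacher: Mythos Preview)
Your proof is correct and follows essentially the same strategy as the paper: identify the universal covers of $Y$ and $Y_\varphi$, use associativity of the tensor product to factor through $\Z[\pi_1(Y)]$, and then exhibit the projection-formula isomorphism $(v\otimes q)\otimes g \mapsto (v\otimes g)\otimes q\cdot\phi(g)$ of right $\Z[\pi_1(Y)]$-modules. In fact you supply more detail than the paper, which simply writes down this map and declares it ``straightforward to verify'' that it is an isomorphism.
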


Let $\wt{Y}_{\varphi}$ be the universal cover of $Y_\varphi$ and let $\wt{S}_{\varphi} := i^*(\wt{Y}_{\varphi})$ be the pull-back cover of $S_{\varphi}$. By definition,
\[C_*(Y_{\varphi},S_{\varphi};Q(H)^d) = Q(H)^d \otimes_{\Z[\pi_1(Y_{\varphi})]} C_*(\wt{Y}_{\varphi},\wt{S}_{\varphi}),\]
where we use  the representation $\a' \otimes \phi'$ to equip $Q(H)^d$ with a right $\Z[\pi_1(Y_{\varphi})]$-module structure.  Observe that $Y$ and $Y_{\varphi}$ have the same universal cover, and that this implies that the pull-back covers $\wt{S} := i^*(\wt{Y})$ and $\wt{S}_{\varphi} = i^*(\wt{Y}_{\varphi})$ also coincide.  Therefore
\[Q(H)^d \otimes_{\Z[\pi_1(Y_{\varphi})]} C_*(\wt{Y}_{\varphi},\wt{S}_{\varphi}) = Q(H)^d \otimes_{\Z[\pi_1(Y_{\varphi})]} C_*(\wt{Y},\wt{S}),\]
which moreover is isomorphic to
\[Q(H)^d \otimes_{\Z[\pi_1(Y_{\varphi})]} \Z[\pi_1(Y)] \otimes_{\Z[\pi_1(Y)]} C_*(\wt{Y},\wt{S}).\]
It is now straightforward to verify that the map
\[ \ba{rcl}
(Q^d\otimes_Q Q(H))\otimes_{\Z[\pi_1(Y_{\varphi})]} \Z[\pi_1(Y)] &\to&
Q^k\otimes_{\Z[\pi_1(Y_\varphi)]}\Z[\pi_1(Y)] \otimes Q(H)
\\
(v\otimes h)\otimes g &\mapsto &(v\otimes g)\otimes h\phi(g) \ea \]
induces an isomorphism of $\Z[\pi_1(Y)]$-right modules, with $\a \otimes \phi$ used to define the right $\Z[\pi_1(Y)]$-module structure on the latter module. The claim now follows from the observation that
\[ H_*(Q^k\otimes_{\Z[\pi_1(Y_\varphi)]}\Z[\pi_1(Y)] \otimes Q(H) \otimes_{\Z[\pi_1(Y)]}C_*(\wt{Y},\wt{S}))\cong H_*(Y,S;Q(H)^{kd}).\]

We now write
\[\wt{C}_*:=Q(H)^{kd} \otimes_{\Z[\pi_1(Y)]} C_*(\wt{Y},\wt{S}).\]
By the claim it is enough to show that $\wt{C}_*$ is acyclic.
We now follow a standard argument (see e.g. \cite[Proposition~2.10]{COT}).
Note that chain complexes of finite CW complexes comprise finitely generated free modules.  Define \[C_*:= C_*(Y,S;\Z_p).\]  Since $i_* \colon H_*(S;\Z_p) \xrightarrow{\simeq} H_*(Y;\Z_p)$, we see that $H_*(Y,S;\Z_p) \cong 0$, which implies that $C_*(Y,S;\Z_p)$ is contractible, since it is a chain complex of finitely generated free modules.

Let $\g \colon C_i \to C_{i+1}$ be a chain contraction for $C_*$, so that $d\g_i + \g_{i-1}d = \Id_{C_i}$.
Since $\eps_p$ is surjective and all modules are free, there exists a lift of $\g$, \[\wt{\g} \colon C_{i}(\wt{Y},\wt{S}) \to C_{i+1}(\wt{Y},\wt{S})\]
as shown in the following diagram:
\[\xymatrix  @C+1cm {C_i(\wt{Y},\wt{S}) \ar@{-->}[r]^{\wt{\g}} \ar[d]^{\eps_p} & C_{i+1}(\wt{Y},\wt{S}) \ar[d]^{\eps_p} \\ C_i \ar[r]^{\g} & C_{i+1}.
}\]
  Now,
\[ f:= d\wt{\g} + \wt{\g}d \colon C_{i}(\wt{Y},\wt{S}) \to C_{i}(\wt{Y},\wt{S})\]
is a homomorphism of free $\Z[\pi_1(Y)]$-modules whose augmentation $\eps_p(f)$ is the identity and is therefore invertible.  Note that $f$ is also a chain map, since
\[df - fd = d(d\wt{\g} + \wt{\g}d) - (d\wt{\g} + \wt{\g}d)d = d\wt{\g}d - d\wt{\g}d = 0.\]
Our Theorem \ref{Main_Theorem_2} implies that \[(\a \otimes \phi)(f) \colon \wt{C}_i \to \wt{C}_i\]
is injective, and moreover as an injective morphism between vector spaces of the same dimension is in fact an isomorphism.  Thus, $\wt{\g}$ is a chain homotopy between the chain isomorphism $(\a \otimes \phi)(f)$ and the zero map.  Since a chain isomorphism induces an isomorphism of homology groups, and chain homotopic chain maps induce the same maps on homology, the proof is complete.
\end{proof}

The following corollary is  Casson and Gordon's   Lemma 4 in \cite{CassonGordon}. One of our main motivations in writing this paper was to put this lemma into context and to isolate the algebra involved in the proof of the lemma.
(We refer to \cite[Proposition~3.3]{Let00} for a related result.)

\begin{corollary}\label{cor:CGlemma4}
Let $X$ be a connected infinite cyclic cover of a finite complex $Y$.  Let $\wt{X}$ be a regular $p^r$-fold covering of $X$, with $p$ prime.  If $H_*(Y;\Z_p) \cong H_*(S^1;\Z_p)$, then $H_*(\wt{X};\Q)$ is finite-dimensional.
\end{corollary}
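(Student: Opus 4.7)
The plan is to invoke Proposition \ref{prop:lifting_homology_to_covers} with $S = S^1$, and then unravel both sides of the resulting isomorphism. Since $\phi \colon \pi_1(Y) \to \Z$ classifying $X$ is surjective and $H_1(Y;\Z_p) \cong \Z_p$, I can choose $i \colon S^1 \to Y$ with $\phi \circ i_* = \id_\Z$, and such an $i$ induces an isomorphism $H_*(S^1;\Z_p) \xrightarrow{\simeq} H_*(Y;\Z_p)$ (the only nontrivial check is in degree $1$, where the composition factors through the isomorphism $\phi_* \colon H_1(Y;\Z_p) \to \Z_p$). Let $N \triangleleft \pi_1(X)$ correspond to the cover $\wt X$, so $\pi_1(X)/N$ is a $p$-group of order $p^r$.

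Before applying Proposition \ref{prop:lifting_homology_to_covers}, I would first replace $N$ with its normal core $N' := \bigcap_{g \in \pi_1(Y)} gNg^{-1}$ in $\pi_1(Y)$. Since $\pi_1(Y)$ is finitely generated and thus has only finitely many index-$p^r$ subgroups, $N'$ is a finite intersection, has finite index in $\pi_1(X)$, and $\pi_1(X)/N'$ embeds in a finite product of conjugates $\pi_1(X)/(gNg^{-1})$ and so is still a $p$-group. Letting $\wt X' \to X$ be the corresponding cover, the map $\wt X' \to \wt X$ is a finite cover of $p$-power degree, so in characteristic zero the transfer embeds $H_*(\wt X;\Q)$ into $H_*(\wt X';\Q)$, and it suffices to show $H_*(\wt X';\Q)$ is finite-dimensional.

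Now, with $N'$ normal in $\pi_1(Y)$, set $\Pi := \pi_1(Y)/N'$ and $G := \pi_1(X)/N'$. Lemma \ref{Lemma:group_theoretic_lemma} applied to $\Pi \to \Z$ with kernel $G$ produces a finite index subgroup $F = l\Z \subset \Z$ and a monomorphism $\Psi \colon F \times G \hookrightarrow \Pi$ whose image equals $\phi^{-1}(l\Z) \subset \Pi$ and is finite index normal. Let $\varphi \colon \Z \to \Z/l$ be the quotient map and $Y_\varphi \to Y$ the associated $l$-fold cyclic cover, so that $\pi_1(Y_\varphi)/N' \cong F \times G$. I define $\a' \colon \pi_1(Y_\varphi) \to \GL(|G|,\Q)$ by precomposing the right regular representation of $G$ with $\pi_1(Y_\varphi) \twoheadrightarrow F \times G \twoheadrightarrow G$. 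Then $\a'|_{\pi_1(X)}$ factors through the $p$-group $G$, so Proposition \ref{prop:lifting_homology_to_covers} yields $H_*(S_\varphi;V) \cong H_*(Y_\varphi;V)$ where $V := \Q(t)^{|G|}$.

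Finally, I evaluate both sides. The identification $V \cong \Q(t) \otimes_{\Z[F]} \Z[F \times G]$ of right $\Z[F \times G]$-modules together with flatness of $\Q(t)$ over $\Q[F]$ gives
\[H_*(Y_\varphi;V) \cong \Q(t) \otimes_{\Q[F]} H_*(\wt X';\Q),\]
where $\Q[F]$ acts via the residual $F$-deck transformation action on $\wt X'$. On the other hand, $S_\varphi \simeq S^1$, and a generator of $\pi_1(S_\varphi) = l\Z$ has image $(l, g_0) \in F \times G$ for some $g_0 \in G$, acting on $V$ as $T = t^l R(g_0)$ where $R(g_0)$ is a finite-order permutation matrix; since $T^s = t^{ls} I$ for $s := \mathrm{ord}(g_0)$ and $t^{ls} - 1$ is invertible in $\Q(t)$, the factorization $T^s - I = (T - I)(T^{s-1}+\cdots+I)$ forces $T - I$ to be invertible, and hence $H_*(S_\varphi;V) = 0$. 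Therefore $H_*(\wt X';\Q)$ is $\Q[F]$-torsion; since $Y_\varphi$ is a finite complex and $G$ is finite, it is also finitely generated over the PID $\Q[F] \cong \Q[t^l,t^{-l}]$, and is thus finite-dimensional over $\Q$. The hardest part will be the bookkeeping needed to identify $H_*(Y_\varphi;V)$ with the tensor product displayed above, carefully tracking how $N$, $N'$, and the image $\Psi(F \times G)$ sit inside $\pi_1(Y)$.
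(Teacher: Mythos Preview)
Your overall strategy mirrors the paper's closely: both arguments set up an application of Proposition~\ref{prop:lifting_homology_to_covers} with $S=S^1$, use Lemma~\ref{Lemma:group_theoretic_lemma} to pass to a cyclic cover $Y_\varphi$ on which the relevant quotient splits as $F\times(\text{$p$-group})$, take $\alpha'$ to be the regular representation of the $p$-group, compute directly that $H_*(S_\varphi;\Q(t)^d)=0$, and conclude that the rational homology of the cover is $\Q[t,t^{-1}]$-torsion and hence finite-dimensional. The only substantive difference is your preliminary normal-core reduction, which is meant to justify treating $\pi_1(\wt X)$ as normal in $\pi_1(Y)$; the paper's proof simply writes $\Gamma=\pi_1(Y)/\pi_1(\wt X)$ without comment.

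That extra step, however, has a genuine gap. You claim $N'=\bigcap_{g\in\pi_1(Y)}gNg^{-1}$ is a finite intersection because ``$\pi_1(Y)$ is finitely generated and thus has only finitely many index-$p^r$ subgroups.'' But $N$ is \emph{not} of finite index in $\pi_1(Y)$: it has index $p^r$ in $\pi_1(X)=\ker\phi$, which itself has infinite index in $\pi_1(Y)$. The conjugates $t^nNt^{-n}$ are index-$p^r$ normal subgroups of $\pi_1(X)$, and $\pi_1(X)$ need not be finitely generated (e.g.\ the commutator subgroup of a free group on two generators is not), so finiteness of this family does not follow from what you wrote. If $N'$ had infinite index in $N$, your transfer reduction to $\wt X'$ would also collapse.

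The step can be salvaged with more work. From the Wang sequence and the hypothesis $H_*(Y;\Z_p)\cong H_*(S^1;\Z_p)$ one sees that $t-1$ acts as an automorphism on each $H_i(X;\Z_p)$; since these are finitely generated over $\Z_p[t,t^{-1}]$ ($Y$ being finite) they are torsion, hence finite. In particular $H_1(\pi_1(X);\Z_p)$ and $H_2(\pi_1(X);\Z_p)$ are finite, so the terms $K_c$ of the $p$-lower central series of $\pi_1(X)$ all have finite index. Any surjection $\pi_1(X)\to P$ onto a $p$-group of order $p^r$ kills some $K_c$; since $K_c$ is characteristic in $\pi_1(X)$ and hence normal in $\pi_1(Y)$, every conjugate $t^nNt^{-n}$ contains $K_c$, and there are only finitely many such subgroups. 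With this in hand, the rest of your argument is correct.
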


\begin{proof}
 Define
\[\phi \colon \pi_1(Y) \to \pi_1(Y)/\pi_1(X) \xrightarrow{\simeq} \ll t\rr.\]
Let $S=S^1$. We can find a subcomplex $S^1 \subset Y$ which represents an element in $\pi_1(Y)$ which gets sent to $1$ under the map $\phi$.
Note that this $S^1$ necessarily represents a generator of $H_1(Y;\Z_p)$.  Let $i \colon S^1 \hookrightarrow Y$ be the inclusion.
Define $\G := \pi_1(Y)/\pi_1(\wt{X})$. The group $\G$ fits into a short exact sequence
\[1 \to P \to \G \to \ll t\rr \to 1,\]
where $P := \pi_1(X)/\pi_1(\wt{X})$ is a $p$-group with $|P| = p^r$.
 As in Lemma \ref{Lemma:group_theoretic_lemma} and Remark \ref{remark:group_theoretic_remark},
  there is a finite index subgroup $\ll t^l\rr \times P \leq \G$, such that
  \[\ll t^l\rr \to \ll t^l\rr \times P \to \G \xrightarrow{\phi} \ll t\rr\] maps $\ll t^l\rr$ isomorphically onto its image.
   Define $\varphi \colon \ll t\rr =\Z \to \Z/l\Z =:A$ to be the quotient map.
     Let $Y_{\varphi}$ be the induced cover of $Y$, then we have an epimorphism \[\pi_1(Y_{\varphi}) \to\ll t^l\rr \times P.\] Then \[\phi' \colon \pi_1(Y_{\varphi}) \to \ll t^l\rr \times P \to \ll t^l\rr \to \ll t\rr\] is the restriction of $\phi$.  Since $\pi_1(Y_{\phi})$ also projects to $P$, define \[\a'\colon \pi_1(Y_{\phi}) \to P \to \GL(p^r,\Q)\] via the regular representation $P \to \Aut_{\Q}(\Q[P])$: we take $d := p^r$ and $Q := \Q$.

We can then apply Proposition \ref{prop:lifting_homology_to_covers}, to see that
\[H_*(S^1_{\varphi};\Q(t)^d) \cong H_*(Y_{\varphi};\Q(t)^d).\]
An elementary calculation shows that $H_*(S^1_{\varphi};\Q(t)^d)=0$.
This implies that:
\[H_*(Y_{\varphi};\Q(t)^d) \cong 0,\]
which then implies, since $\Q(t)$ is flat over $\Q[t^{\pm 1}]$, that $H_*(Y_{\varphi};\Q[t^{\pm 1}]^d)$ is torsion over $\Q[t^{\pm 1}]$, i.e. finite dimensional over $\Q$.
The corollary now follows from the observation that
\[H_*(Y_{\varphi};\Q[t^{\pm 1}]^d) \cong H_*(Y_{\varphi};\Q[P][t^{\pm 1}]) \cong H_*(\wt{X};\Q).\]
\end{proof}

\bibliographystyle{annotate}
\bibliography{markbib1}

\end{document}